\documentclass[titlepage,draft,11pt]{article} 
\usepackage{amsfonts,latexsym} 
\usepackage{pstricks,pst-plot}
\usepackage{amsmath} 
\usepackage{amsthm}
\usepackage[a4paper]{geometry}



\textwidth 15.7cm
\textheight 22.2cm
\topmargin -0.6cm
\oddsidemargin 0cm


\date{}


%
%


%


\newcommand{\re}{\mathbb{R}}


\newtheorem{thm}{Theorem}[section]

\newtheorem{rmk}[thm]{Remark}

\newtheorem{cor}[thm]{Corollary}

\def\liminf{\mathop{\underline{\rm lim}}}

 
\title{On a linearly damped 2 body problem }

\author{Alain Haraux\vspace{1ex}\\ 
{\normalsize Sorbonne Universit\'e, Universit\'e Paris-Diderot SPC, CNRS, INRIA}, \\
{\normalsize Laboratoire Jacques-Louis Lions,  LJLL, F-75005,
Paris, France.}\\ 
{\normalsize e-mail: \texttt{haraux@ann.jussieu.fr}}}


\begin{document}
\maketitle
\begin{abstract}
The usual equation for both motions of a single planet around the sun and electrons  in the deterministic  Rutherford-Bohr  atomic model is conservative with a singular potential at the origin. When a dissipation is added, new phenomena appear. It is shown that whenever the momentum is not zero, the moving particle does not  reach the center in finite time and its displacement does not blow-up either, even in the classical context where arbitrarily large velocities are allowed.  Moreover we prove that all bounded solutions tend to $0$ for $t$ large, and  some formal calculations suggest the existence of special orbits with an asymptotically spiraling exponentially fast convergence to the center.  \\

\vspace{1cm} 

\noindent{\textbf Key words:} gravitation, singular potential, global solutions, spiraling orbit.
\end{abstract}

 
\section{Introduction}  The usual equation for both motions of a single planet around the sun and electrons  in the deterministic  Rutherford-Bohr  atomic model is conservative with a singular potential at the origin. In the previous paper \cite{Tired}, the author raised the hypothesis that some phenomena such as carboniferous gigantism of arthropods and vegetals (cf.\cite {HKV, Parks} ), the appearent expansion of the universe  (cf.\cite{Hubble})or slight deformation of electrical devices over decades might be explained by an atomic contraction phenomenon (with respect to the size of the proton) coming from a very weak environmental dissipation produced by what we consider to be vacuum.  When a dissipation is added to the basic equation $$ mu'' = - \frac{q^2}{4\pi \varepsilon_0} \frac {u} {|u|^3} $$  modelling Coulomb's central force (with $q$ the elementary charge, m the mass of the electron  and $\varepsilon_0$  the vacuum permittivity) or its  equivalent  Newton's law for planets (where $G$ is the gravitational constant and $M_S$ the mass of the sun) $$ u'' = - GM_S \frac {u} {|u|^3} $$ written in complex form in the orbital plane with a suitable set of axes and length unit,  new interesting phenomena appear. This leads us to consider,  as the simplest possible dissipative perturbation of the conservative problem \begin{equation} \label{cons} u'' + c_{0}  \frac {u} {|u|^3} = 0 \end{equation}  including both equations,  the modified ODE \begin{equation}\label{diss} u'' + \delta u' + c_{0}  \frac {u} {|u|^3} = 0 \end{equation}In \cite{Tired}, considering the difficulty of exhibiting decaying solutions for the equation \eqref{diss}, the author investigated a slightly more complicated case where we allow the charge $q$ to decay exponentially in time. This led to the equation
  \begin{equation} \label{tired} u'' + \delta u' + c_{0} e^{-\alpha t} \frac {u} {|u|^3} = 0 \end{equation}
   for which explicit exponentially decaying solutions depending on two real parameters can be exhibited. More precisely for $\alpha = \delta$ we found the fast rotating spiraling solutions 
   $$ u_\pm(t) = U e^{-\delta t}  \exp ( \pm i \frac{ \sqrt{c_0}}{\delta U^{3/2}} e^{\delta t} + i \phi) $$ whereas for $\alpha = \frac{3}{2}\delta$ , we found the uniformly rotating spiraling solutions  $$ v_\pm(t) =V e^{-\frac{\delta}{2} t}  \exp ( \pm i t \sqrt{V^{-3}c_0 +  \frac{\delta^2}{4} }  + i \phi) $$
Although it would probably be interesting to do, at the level of \cite{Tired} we did not try to prove well-posedness (global existence) for general initial data not leading to collision with the center, nor did we try to elucidate global behavior of trajectories other than the previous special  solutions. Before deciding to do that, one must first try to see whether or not the simpler model \eqref{diss} is sufficient to identify a contraction phenomenon, and possibly determine the extent of its stability. \\

In the present paper, we derive some partial results on \eqref{diss} together with some heuristic discussion. More precisely, in Section 2, we prove that whenever the momentum is not zero, the moving particle does not  reach the center in finite time and its displacement does not blow-up in finite time either, even in the classical context where arbitrarily large velocities are allowed. In Section 3, we prove that  $u(t)$ remains bounded under a smallness condition involving both initial radius and velocity. In Section 4, we prove that all bounded non-vanishing solutions converge to $0$ at infinity in $t$. Finally, in Section 5, some formal calculations suggest the existence of special orbits with an asymptotically spiraling convergence to the center. We conclude by a few heuristic remarks.
 \section{A global existence result}  For convenience we recall our basic equation \eqref{diss}
\begin{equation}\label{dis} u'' + \delta u' + c  \frac {u} {|u|^3} = 0 \end{equation}  where  we simply wrote $c$ instead of $c_0$, since for mathematicians $c$ is not always the velocity of light. In order to solve this equation with $u = u(t) \in \re^2 = \mathbb{C}$ we introduce the amplitude and the phase 
$$ u(t) = r(t) e^{i\theta(t)} $$ and we shall drop the variable $t$ when it does not lead to confusion. Here all derivatives are with respect to $t$. From the formulas 
$$ u' = r' e^{i\theta} + ir \theta'  e^{i\theta}= ( r' + ir \theta' )e^{i\theta} =  ( r' + ir \theta' ) \frac{u} {|u|} $$ 
$$ u''= [(r'' -r \theta'^2) + i( 2r'  \theta'  + r \theta'' )] \frac{u} {|u|} $$ we conclude that \eqref{dis} is equivalent to the system of two real equations 
\begin{equation}\label{r} r'' -r \theta'^2+ \delta r' +  \frac {c} {r^2} = 0 \end{equation} 
\begin{equation}\label{theta} 2r'  \theta'  + r \theta'' + \delta r\theta' = 0 \end{equation} 
We observe first that when trying to solve \eqref{dis} for $t\ge 0$, the initial value $u(0) = 0$ is excluded by the singularity, while whenever $u(0)\not= 0$ we shall obtain  at least a local solution for any initial velocity $u'(0)$. Moreover, multiplying \eqref{theta} by $r$ we reduce it to  
 $$ ( r^2 \theta') '   + \delta r^2\theta' = 0 $$ so that \begin{equation}\label{theta1}\forall t\ge 0, \quad r^2(t) \theta' (t) = M e^{-\delta t}\end{equation}  with $$ M =:  r^2(0) \theta' (0) $$  This relation expresses the variation of the momentum of the solution, which was constant in the conservative case $\delta =0$ and decays exponentially when $\delta>0$. By plugging in \eqref{r} the value of $\theta'$ given by \eqref{theta1}, we are left with an equation involving $r(t) $ only:
  \begin{equation}\label{r1} r'' -\frac{M^2e^{-2\delta t}}{r^3} +  \frac {c} {r^2} + \delta r' = 0 \end{equation} 
We now state our first result \begin{thm} Assuming $ M \not=0 $, the unique local non-vanishing solution $r= r(t)$ of \eqref{r1} with any initial data $r(0)= r_0\not=0; r'(0) = r'_0 \in \mathbb{R} $   is global and we have for some constants $C>0, \eta>0 $ 
$$  \forall t\ge 0, \quad | r' (t)| \le C e^{\delta t}$$ 
$$  \forall t\ge 0, \quad r (t)  \ge \eta  e^{-2\delta t}$$  \end{thm}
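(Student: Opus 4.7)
The plan is to work on the maximal interval $[0,T^{*})$ on which the local solution of \eqref{r1} stays strictly positive, to extract both estimates from a single energy identity, and then to rule out $T^{*}<+\infty$ by the standard continuation criterion.

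A direct computation, starting from \eqref{r1} multiplied by $r'$, yields the Lyapunov identity
\[
\frac{d}{dt}\!\left[\frac{(r')^{2}}{2}+\frac{M^{2}e^{-2\delta t}}{2r^{2}}-\frac{c}{r}\right]=-\delta(r')^{2}-\delta\,\frac{M^{2}e^{-2\delta t}}{r^{2}}.
\]
Calling $E(t)$ the bracketed expression, this shows $E(t)\le E(0)$ throughout $[0,T^{*})$. Both desired estimates can then be extracted from this inequality by discarding, at each step, one of the two non-negative terms on the left.

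To bound $r'$, drop $M^{2}e^{-2\delta t}/(2r^{2})\ge 0$ to obtain $(r')^{2}/2\le E(0)+c/r-M^{2}e^{-2\delta t}/(2r^{2})$; viewed as a function of $r>0$, the right-hand side is maximised at $r=M^{2}e^{-2\delta t}/c$, with maximum value $E(0)+c^{2}e^{2\delta t}/(2M^{2})$, so $|r'(t)|\le Ce^{\delta t}$ follows at once. To bound $r$ from below, drop instead $(r')^{2}/2\ge 0$ and multiply by $2r^{2}>0$ to obtain the quadratic constraint $2E(0)r^{2}+2cr-M^{2}e^{-2\delta t}\ge 0$. A case-by-case analysis on the sign of $E(0)$, followed by rationalisation of the relevant positive root, gives the unified bound
\[
r(t)\ge\frac{M^{2}e^{-2\delta t}}{c+\sqrt{c^{2}+2\max\bigl(E(0),0\bigr)M^{2}e^{-2\delta t}}}\ge \eta\,e^{-2\delta t}
\]
for some explicit $\eta>0$ uniform in $t\ge 0$.

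These bounds imply that $(r,r')$ remains in a compact subset of $\{r>0\}\times\mathbb{R}$ on every closed sub-interval of $[0,T^{*})$, and since the vector field of \eqref{r1} is smooth there, classical ODE continuation forces $T^{*}=+\infty$. I expect the main obstacle to be the lower bound on $r$: the sign of $E(0)$ alters the geometry of the quadratic (positive-leading parabola with one positive root versus negative-leading parabola with two positive roots lying on either side of $r(t)$), and it is only after rationalisation that the two sub-cases merge into the single formula above. The rest of the argument is essentially bookkeeping.
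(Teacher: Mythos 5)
Your proof is correct and follows essentially the same route as the paper: you use the identical Lyapunov function $E(t)=\frac{(r')^{2}}{2}+\frac{M^{2}e^{-2\delta t}}{2r^{2}}-\frac{c}{r}$ (the paper's $F$), and your extraction of the two bounds by optimizing in $1/r$ and solving the quadratic constraint in $r$ is algebraically equivalent to the paper's single completion of the square in $e^{-\delta t}/r(t)$. The continuation argument you spell out at the end is the same one the paper leaves implicit, and your case analysis on the sign of $E(0)$ is if anything slightly more careful than the paper's uniform constant.
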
 
\begin{proof} The existence and uniqueness of local non-vanishing solutions is obvious. Let us introduce the scalar function $F$, defined as long as the solution $r$ exists and does not vanish  by the formula:
 \begin{equation}\label{F} F(t) : = \frac{r'^2(t)}{2}  - \frac {c} {r(t)} + \frac{M^2}{2} \frac{e^{-2\delta t}}{ r^2(t)}  \end{equation} We have immediately
   $$ F'(t) = r' (t) ( r''(t) + \frac {c} {r^2(t)} -  \frac{M^2  e^{-2\delta t}}{ r^3(t)}  ) -  \frac{M^2 \delta e^{-2\delta t}}{ r^2(t)}  =  -\delta r'^2(t) -  \frac{M^2 \delta e^{-2\delta t}}{ r^2(t)} < 0 $$ 
    In particular for all $ t\in [0, Tmax)$ we have 
    $$  \frac{r'^2(t)}{2}  - \frac {c} {r(t)} + \frac{M^2}{2} \frac{e^{-2\delta t}}{ r^2(t)} \le F_0: = F(0) $$ which can be rearranged in the more convenient form 
    
    \begin{equation} \frac{r'^2(t)}{2}  + \frac{M^2}{2} \left(\frac{e^{-\delta t}}{ r(t)}- \frac{c}{M^2} e^{\delta t}\right)^2 \le F_0 + \frac{c^2}{2M^2} e^{2\delta t}  \end{equation} providing  at once the two inequalities 
    $$ \forall t\in [0, Tmax), \quad r'^2(t) \le 2(F_0 + \frac{c^2}{2M^2} e^{2\delta t}) \le (2 F_0 + \frac{c^2}{M^2} ) e^{2\delta t} $$ and 
    $$ \forall t\in [0, Tmax), \quad \frac{e^{-\delta t}}{ r(t)} \le \frac{c}{M^2} e^{\delta t} + \sqrt{\frac{2}{M^2}F_0 +\frac{ c^2 }{M^4} e^{2\delta t}  } \le Ke^{\delta t} $$ The conclusions follow immediately with $$ C = (2 F_0 + \frac{c^2}{M^2} )^{1/2}; \quad \eta = \frac{1}{K}. $$
 \end{proof}
 
  \begin{cor} Assume  \begin{equation} Im ( \overline {u_0 } u'_0 ) \not = 0 \end{equation} Then the local solution of \eqref{dis} with initial conditions $ u(0) = u_0 ; u'(0) = u'_0$  is global and satisfies for some constants $D>0, \eta>0 $ 
\begin{equation} \forall t\ge 0, \quad | u' (t)| \le D e^{\delta t}\end{equation}
\begin{equation}  \forall t\ge 0, \quad |u (t)| \ge \eta  e^{-2\delta t}\end{equation}  
  
 \end{cor}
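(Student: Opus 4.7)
The plan is to reduce the corollary directly to the theorem via the polar decomposition $u = r e^{i\theta}$ that is already used in the setup.

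First I would verify that the hypothesis $\mathrm{Im}(\overline{u_0}u'_0)\neq 0$ is exactly the condition $M\neq 0$ needed in the theorem. Writing $u_0 = r_0 e^{i\theta_0}$ and $u'_0 = (r'_0 + i r_0 \theta'(0))e^{i\theta_0}$, one computes $\overline{u_0}u'_0 = r_0 r'_0 + i r_0^2 \theta'(0)$, so $\mathrm{Im}(\overline{u_0}u'_0) = r_0^2 \theta'(0) = M$. Hence $M\neq 0$ and the theorem applies.

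Next, I would promote the radial global solution to a global solution of \eqref{dis}. Since the theorem provides a global $r(t)$ with $r(t)\geq \eta e^{-2\delta t} > 0$, the relation \eqref{theta1} gives $\theta'(t) = M e^{-\delta t}/r^2(t)$, a continuous function on $[0,\infty)$, which integrates to a global $\theta(t)$. Setting $u(t)=r(t)e^{i\theta(t)}$ produces a global solution of \eqref{dis} whose initial data match $(u_0, u'_0)$, and by local uniqueness this is the unique local solution extended globally. The lower bound $|u(t)| = r(t) \geq \eta e^{-2\delta t}$ is then immediate.

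For the velocity bound, I would use the identity $|u'|^2 = r'^2 + r^2 \theta'^2$, which follows from $u' = (r'+ir\theta')e^{i\theta}$. Substituting $r^2\theta' = Me^{-\delta t}$ from \eqref{theta1} yields
\begin{equation*}
|u'(t)|^2 = r'(t)^2 + \frac{M^2 e^{-2\delta t}}{r(t)^2}.
\end{equation*}
The first term is controlled by $r'(t)^2 \leq C^2 e^{2\delta t}$ from the theorem, and the second by $e^{-\delta t}/r(t) \leq K e^{\delta t}$, which is the second inequality displayed inside the theorem's proof. Combining gives $|u'(t)|^2 \leq (C^2 + M^2 K^2) e^{2\delta t}$, so the bound holds with $D = \sqrt{C^2 + M^2 K^2}$.

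No step is genuinely hard here; the only subtle point is recognizing that the intermediate estimate $e^{-\delta t}/r(t) \leq K e^{\delta t}$ from the proof of the theorem, and not merely its stated lower bound on $r$, is what delivers the correct control on the angular part of $|u'|$. Once that is noticed, the corollary is essentially an algebraic repackaging of the theorem.
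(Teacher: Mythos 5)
Your proposal is correct and follows essentially the same route as the paper: identify $\mathrm{Im}(\overline{u_0}u'_0)$ with the momentum $M$ (indeed $r_0^2\theta'(0)$, fixing a small typo in the paper, which writes $r(0)\theta'(0)$), invoke the theorem for the radial part, reconstruct $\theta$ from \eqref{theta1}, and bound $|u'|$ via $u'=(r'+ir\theta')e^{i\theta}$ with $|r\theta'|=Me^{-\delta t}/r$. One minor quibble: the estimate $e^{-\delta t}/r(t)\le Ke^{\delta t}$ you highlight is exactly equivalent to the theorem's stated bound $r(t)\ge\eta e^{-2\delta t}$ with $\eta=1/K$, so the stated conclusion of the theorem already suffices and nothing extra from its proof is needed.
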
 
\begin{proof} I suffices to prove that the two above inequalities are satisfied for all $ t\in [0, Tmax)$, the maximal existence time for a non-vanishing solution $u$ of \eqref{dis} with initial conditions $ u(0) = u_0 ; u'(0) = u'_0$.  Selecting for $\theta$ a continuous determination of the argument of $u(t)$, the pair $(r, \theta) $ satisfies the equivalent system of equations \eqref{r}-\eqref{theta}. In addition an immediate calculation shows that $\theta' =  Im ( \frac{\overline{u}} {|u|} u' )$  and in particular $$ Im ( \overline {u_0 } u'_0 ) = r(0) \theta'(0)$$ By the previous theorem, the solution $r(t)$ of the scalar equation for the radius is global and satisfies the concluding inequalities. Then the pair $(r, \theta) $ correspond to the non-vanishing solution $u$ of \eqref{dis} with initial conditions $ u(0) = u_0 ; u'(0) = u'_0$ which is therefore global with $ |u (t)| \ge \eta  e^{-2\delta t}$.  For the last inequality we observe that $$ |\theta'(t) r(t) \le Me^{-\delta t} r^{-1}(t) \le \frac{M}{\eta} e^{\delta t}  $$ Then $$ |u'(t)| = | r' + ir \theta'| \le | r'(t)| + | r (t)\theta'(t)| \le (C+  \frac{M}{\eta} ) e^{\delta t}  : = D  e^{\delta t} .$$\end{proof}

 \section{Bounded trajectories}  The next result does not require the condition $M\not=0 $. \begin{thm}  Let  $u_0\not=0 $ and assume  the initial smallness condition  
 \begin{equation}\label{small} |u_0||u'_0|^2 < 2c \end{equation}  Then the local solution $u$  of \eqref{dis} on $[0, T)$ with initial conditions $ u(0) = u_0 ; u'(0) = u'_0$   satisfies the inequality 
 \begin{equation}\label{bound} \forall t\in [0, T) , \quad |u(t)| \le \frac{2c|u_0|}{2c - |u_0||u'_0|^2}\end{equation} In particular if $u$ does not vanish in finite time, $u$ is a global bounded non-vanishing solution .  
  \end{thm}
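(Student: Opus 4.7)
The plan is to exploit the classical mechanical energy along the damped trajectory. I introduce
\[
E(t) := \frac{|u'(t)|^2}{2} - \frac{c}{|u(t)|},
\]
which is well-defined and $C^1$ as long as $u(t) \neq 0$. Taking the real inner product of \eqref{dis} with $u'$ and observing that $\frac{d}{dt}\bigl(-c/|u|\bigr) = c\,\mathrm{Re}(\bar u u')/|u|^3$ will yield the dissipation identity
\[
E'(t) = -\delta\,|u'(t)|^2 \le 0,
\]
so that $E(t) \le E(0) = E_0$ on the whole interval of existence $[0,T)$.

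Next I translate the smallness condition \eqref{small} into a statement about the sign of $E_0$. Indeed, $|u_0||u'_0|^2 < 2c$ is equivalent to $\tfrac{|u'_0|^2}{2} < \tfrac{c}{|u_0|}$, i.e.\ $E_0 < 0$. From $E(t) \le E_0 < 0$ I drop the nonnegative term $|u'(t)|^2/2$ to get
\[
\frac{c}{|u(t)|} \ge -E_0 = \frac{c}{|u_0|} - \frac{|u'_0|^2}{2} = \frac{2c - |u_0||u'_0|^2}{2|u_0|},
\]
which, after inversion, is exactly the claimed bound \eqref{bound}.

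For the last sentence, I combine the a priori bound with the local theory: since $|u(t)|$ stays bounded above by a finite constant and (by hypothesis) does not vanish on $[0,T)$, the standard continuation criterion for the ODE \eqref{dis}, whose right-hand side is smooth away from the origin, forbids $T$ from being finite (otherwise $u$ would have to leave every compact subset of $\mathbb{C}\setminus\{0\}$). Hence $T = +\infty$ and $u$ is global and bounded.

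I do not anticipate any serious obstacle. The only mild subtlety is to be careful when differentiating $|u|$ at $u \neq 0$: writing $|u|^2 = u\bar u$ makes $\tfrac{d}{dt}|u| = \mathrm{Re}(\bar u u')/|u|$ transparent, and from there the dissipation identity is immediate. The argument does not use equations \eqref{theta1}--\eqref{r1}, and in particular does not require $M \neq 0$, consistently with the statement.
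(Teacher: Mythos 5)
Your proof is correct and is essentially the paper's argument: the function $F(t)=\frac{r'^2}{2}-\frac{c}{r}+\frac{M^2}{2}\frac{e^{-2\delta t}}{r^2}$ used there is exactly your energy $E(t)=\frac{|u'|^2}{2}-\frac{c}{|u|}$ written in polar coordinates, and both proofs reduce to $E(t)\le E_0<0$ followed by dropping the kinetic term. Your version is marginally cleaner in that it never passes through the polar decomposition, and your closing continuation remark (which the paper omits) is fine provided ``does not vanish in finite time'' is read as $|u|$ staying away from $0$ on compact subintervals, since the energy bound then also controls $|u'|$.
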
 
  
  \begin{proof} We start from the inequality  $$  \frac{r'^2(t)}{2}  - \frac {c} {r(t)} + \frac{M^2}{2} \frac{e^{-2\delta t}}{ r^2(t)} \le F_0=   \frac{r'^2_0}{2}  - \frac {c} {r_0} + \frac{M^2}{2 r^2_0}  $$  and we observe that if $F_0<0$, the inequality   $ \displaystyle \frac {c} {r(t)} \ge - F_0 = |F_0| $  implies $ \displaystyle r(t) \le  \frac {c} {|F_0|}. $ Now we compute $F_0$ in terms of the initial data.  In fact we have the formula $$ |u'|^2 = r'^2 + \theta'^2 r^2 = r'^2+ \frac{M^2}{ r^2} $$ so that for $t=0$ we find 
  
  $$  F_0=   \frac{|u'_0|^2}{2}  - \frac {c} {r_0} = \frac{|u'_0|^2}{2} - \frac {c} {|u_0|} $$ therefore the condition $F_0<0$ is equivalent to $ |u_0||u'_0|^2 < 2c$.  The previous observation now gives that under condition \eqref {small}, we have 
  $$ r(t) \le  \frac {c} {\displaystyle  \frac {c} {|u_0|}- \frac {|u'_0|^2}{2} } =  \frac{2c|u_0|}{2c - |u_0||u'_0|^2} $$  \end{proof}
  
   \section{Convergence to 0 of non-vanishing bounded solutions } As in the case of the conservative equation \eqref{cons} which is well known to have elliptic and parabolic trajectories,  we suspect that \eqref{dis} may have some unbounded solutions. But the next result shows that in sharp constrast with  \eqref{cons}, \eqref{dis} does not have any periodic trajectory at all. \begin{thm} For any solution $u$ of \eqref{dis}  such as $|u(t)|$ is global, positive and bounded on $\re^+$, we have 
   
 \begin{equation}\label{conv0} \lim_ {t\to +\infty} |u(t)| = 0\end{equation}   
  \end{thm}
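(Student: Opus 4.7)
The plan is to apply a LaSalle-type invariance argument based on the Lyapunov function $F$ introduced in Section~2. First, using the identity $|u'|^2 = r'^2 + r^2\theta'^2 = r'^2 + M^2 e^{-2\delta t}/r^2$, the function $F$ can be rewritten as the purely autonomous quantity
$$
V(u,w) := \frac{|w|^2}{2} - \frac{c}{|u|}
$$
evaluated at $(u(t), u'(t))$. The identity $F'(t) = -\delta|u'(t)|^2$ established in Section~2 then shows that $F$ is non-increasing on $[0,+\infty)$ and admits a limit $F_\infty \in [-\infty, F(0)]$.

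Two cases are to be distinguished. If $F_\infty = -\infty$, the elementary bound $F(t) \ge -c/|u(t)|$ (the two other summands in $F$ being non-negative) already forces $|u(t)| \to 0$. Otherwise $F_\infty$ is finite, and I argue by contradiction: assuming some $L > 0$ and some sequence $s_n \to \infty$ satisfy $|u(s_n)| \to L$, the relation $|u'(s_n)|^2/2 = F(s_n) + c/|u(s_n)| \to F_\infty + c/L$ forces $|u'(s_n)|$ to remain bounded, so after extraction $(u(s_n), u'(s_n)) \to (u_*, v_*)$ with $|u_*| = L > 0$. Since $u_* \ne 0$, the field in \eqref{dis} is smooth near $(u_*, v_*)$, and classical continuous dependence on initial data produces a fixed interval $[0, \tau]$ on which the translates $t \mapsto u(s_n + t)$ converge uniformly to the solution $\tilde u$ with data $\tilde u(0) = u_*$, $\tilde u'(0) = v_*$. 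Continuity of $V$ on $\{u \ne 0\} \times \mathbb{C}$ then yields $V(\tilde u(t), \tilde u'(t)) = \lim_n F(s_n + t) = F_\infty$ for every $t \in [0, \tau]$; differentiating this constant along the flow gives $|\tilde u'(t)|^2 \equiv 0$, hence $\tilde u \equiv u_*$ on $[0, \tau]$, which contradicts the equation $\tilde u'' = -c u_*/|u_*|^3 \ne 0$.

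The main obstacle is that boundedness of $|u|$ does not by itself confine the phase state $(u, u')$ to a compact subset of the regular region $\{u \ne 0\} \times \mathbb{C}$, since $|u|$ could approach $0$ while $|u'|$ blows up in tandem. The dichotomy on $F_\infty$ is designed precisely to circumvent this: in the finite case, any accumulation of $|u|$ at a positive level $L$ automatically forces a bounded velocity there, which is the precompactness just strong enough to extract a limit orbit and run the invariance argument.
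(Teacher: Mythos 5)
Your proof is correct, and while it shares the paper's skeleton, it resolves the decisive case by a genuinely different device. The function you call $V(u,u')$ is exactly the total energy $E(t)=\frac12|u'(t)|^2-c/|u(t)|$ that the paper introduces, the dissipation identity $E'=-\delta|u'|^2$ is the same, and the dichotomy on whether $E$ tends to $-\infty$ or to a finite limit $E^*$ is identical, with the first case disposed of in the same one-line way. In the second case, however, the paper never passes to a limit orbit: it integrates the dissipation identity to get $u'\in L^2(\re^+)$, deduces $u'(t_n+\cdot)\to 0$ in $L^2(0,1)$ and $u(t_n+\cdot)\to w\ne 0$ uniformly, reads off from the equation that $u''(t_n+\cdot)$ converges in $L^2(0,1)$ to the nonzero constant $-cw/|w|^3$, and obtains a contradiction with $u'(t_n+\cdot)\to 0$ after one integration by parts against $s(1-s)$. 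You instead use the pointwise energy identity to bound $|u'(s_n)|$, extract a phase-space accumulation point $(u_*,v_*)$ with $u_*\ne 0$, invoke continuous dependence for the autonomous field on $\{u\ne 0\}\times\C$ to produce a limit solution on a short interval, and observe that the energy is frozen at $E^*$ along it, forcing $\tilde u'\equiv 0$ and hence an equilibrium, of which the equation has none. Your route is the textbook LaSalle invariance argument and is conceptually cleaner in that it names the obstruction (absence of rest points away from the singularity), at the price of the continuous-dependence machinery; the paper trades that for the integrated form of the dissipation and a direct weak-form manipulation of the equation. Both arguments hinge on the same key point, namely that the accumulation value of $|u|$ is positive, so the singular term is regular near the limit.
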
 
  
  \begin{proof} We introduce the total energy  $$  E(t) : =   \frac{1}{2} |u'|^2(t )- \frac {c} {|u(t)|} $$  Since $u$ never vanishes, it is clear that $u\in C^2(\re^+)$ and we have\begin{equation} \label{E'} E'(t) = -\delta |u'(t)|^2\end{equation}   In particular, $E(t)$ is non-increasing. Then we have two possibilities \medskip
  
 { \bf Case 1}  \begin{equation}\label{1} \lim_ {t\to +\infty} E(t)  =  -\infty \end{equation} Then since $\displaystyle \frac {c} {|u(t)|}\ge -E(t) $ we conclude that 
 $$ \lim_ {t\to +\infty} |u(t)| = 0$$.\medskip

 { \bf Case 2}  \begin{equation}\label{2} \lim_ {t\to +\infty} E(t)  = E^* > -\infty \end{equation} Then  $$ \forall t\in \re^+, \quad E(0)- E(t) = \delta\int _0^{t} |u'(s|^2ds  $$  In particular $u'\in L^2(\re^+, \mathbb{C}).$  We have assumed that $u(t) $ is bounded, hence  precompact in $\re^+$ with values in $\mathbb{C}$. Therefore if \eqref{conv0} is not satisfied we may assume that for some sequence $t_n$ tending to $+\infty$  \begin{equation}\label{convw} \lim_ {n\to +\infty} u(t_n) = w \not= 0\end{equation}  On the other hand we have 
  \begin{equation}\label{conv'} \lim_ {n\to +\infty} u'(t_n+s) =  0\end{equation} in the strong topology of $L^2(0, 1)$ and in particular, by Cauchy-Schwarz-inequality we find  \begin{equation}\label{convw+} \lim_ {n\to +\infty} u(t_n+s) = w \end{equation} uniformly on $[0, 1]$. Since $w \not= 0$ this implies 
  \begin{equation}\label{convf} \lim_ {n\to +\infty} c \frac{u(t_n+s)}{|u(t_n+s)|^2} = c \frac{w}{|w|^2 } \end{equation}  uniformly on $ [0, 1]. $  But then by the equation  \begin{equation}\label{conv''} \lim_ {n\to +\infty} u''(t_n+s) =  c \frac{w}{|w|^2 } \end{equation} in the strong topology of $L^2(0, 1)$. This is contradictory with \eqref {conv'}. Indeed it implies for instance 
  $$  \lim_ {n\to +\infty} \int_0^1 s(1-s) u''(t_n+s) ds = c \int_0^1 s(1-s)  ds \frac{w}{|w|^2} = z\not=0 $$ while on the other hand 
   $$   \int_0^1 s(1-s) u''(t_n+s) ds =  -   \int_0^1 (1-2s)  u'(t_n+s) ds \longrightarrow  0 $$ This contradiction concludes the proof. 
  \end{proof} 
  
  As an immediate consequence of the previous theorem and the results of Sections 2 and 3, we obtain
  
   \begin{cor} Assume  $ Im ( \overline {u_0 } u'_0 ) \not = 0  $ and  $|u_0||u'_0|^2 < 2c$. Then the local solution of \eqref{dis} with initial conditions $ u(0) = u_0 ; u'(0) = u'_0$  tends to $0$ as $t$ tends to infinity.   
 \end{cor}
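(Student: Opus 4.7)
The plan is straightforward: the corollary is a direct synthesis of the three main results of Sections 2--4, so I would just verify that the hypotheses chain together correctly.

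First I would observe, as was essentially computed in the proof of the corollary of Section 2, that writing $u = re^{i\theta}$ gives $\overline{u}u' = rr' + ir^2\theta'$, so that
\begin{equation*}
\mathrm{Im}(\overline{u_0}u'_0) = r(0)^2\theta'(0) = M.
\end{equation*}
Thus the hypothesis $\mathrm{Im}(\overline{u_0}u'_0)\neq 0$ is precisely $M\neq 0$, and the corollary of Section 2 applies. This yields that the maximal non-vanishing solution is in fact global on $\mathbb{R}^+$, and moreover it satisfies the lower bound $|u(t)|\geq \eta e^{-2\delta t} > 0$ for all $t\geq 0$; in particular the solution never vanishes in finite time.

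Next I would invoke the theorem of Section 3 with the hypothesis $|u_0||u'_0|^2 < 2c$. Since the first step rules out finite-time collision, the conclusion of that theorem, applied on the full half-line $[0,+\infty)$, gives the uniform upper bound
\begin{equation*}
|u(t)| \leq \frac{2c|u_0|}{2c - |u_0||u'_0|^2} \qquad \forall t\geq 0.
\end{equation*}
Combined with the previous step, $u$ is now a global, bounded, non-vanishing solution of \eqref{dis}.

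Finally, I would apply the theorem of Section 4 to this $u$: every such solution satisfies $\lim_{t\to+\infty} |u(t)| = 0$, which is exactly the desired conclusion. There is no real obstacle here; the only thing worth being careful about is the initial identification $M = \mathrm{Im}(\overline{u_0}u'_0)$, which ensures that the angular-momentum hypothesis of the Section 2 corollary is exactly the one stated in the present statement.
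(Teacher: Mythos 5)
Your proposal is correct and is exactly the argument the paper intends: it states the corollary as ``an immediate consequence'' of the Section~2 corollary (global existence and non-vanishing when $M=\mathrm{Im}(\overline{u_0}u'_0)\neq 0$), the Section~3 boundedness theorem, and the Section~4 convergence theorem, which is precisely the chain you verify. Your identification $\mathrm{Im}(\overline{u_0}u'_0)=r(0)^2\theta'(0)=M$ is the right one (the paper's own proof in Section~2 writes $r(0)\theta'(0)$, an apparent typo), and your use of the lower bound $|u(t)|\ge \eta e^{-2\delta t}$ to rule out finite-time vanishing before invoking Section~3 is the correct way to close the loop.
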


 \section{A formal calculation for spiraling solutions} We now  try  a formal calculation suggesting that  solutions with norm equivalent to a decreasing exponential for $t$ large might however  exist.  We would like to see which sort of solutions would replace the circular trajectories of the conservative problem \eqref{cons} (cf. also section 2.) Observing that such trajectories correspond to a constant angular velocity and for them the nonlinear term of the radial equation vanishes, it is natural to try 
 $$ r(t) = \frac{M^2}{c} e^{-2\delta t}. $$  By the argument above his cannot be an exact solution of the equation and indeed  $$ r''+ \delta r' = \frac{M^2}{c} 2\delta^2  e^{-2\delta t} \not = 0 $$ but at least the result is non-singular and even becomes very small if $\delta$ is small. This gives the idea to look for a solution of the form \begin{equation}  r(t) = \frac{M^2}{c} e^{-2\delta t} + \varepsilon (t)\end{equation}  where $\varepsilon (t)$ would be negligible with respect to $\frac{M^2}{c} e^{-2\delta t}$, either uniformly, or at least for $t$ large.  Plugging this in the equation we find $$ c\varepsilon(t) = -r^3(r''+\delta r')  $$ If now we replace both 
 $$r^3\sim  \frac{M^6}{c^3} e^{-6\delta t}; \quad r''+\delta r'\sim \frac{M^2}{c} (2\delta^2) e^{-2t} $$ by their alleged ``main part" we find 
 
 $$ \varepsilon(t) \sim  - 2 \delta^2 \frac{M^8}{c^5}  e^{-8\delta t}  $$ which is indeed quite negligible with respect to $\frac{M^2}{c} e^{-2\delta t}$ for $t$ large and even uniformly if $\delta$ and  $\frac{M^6}{c^4}$ are both small enough. To see the rule we main consider trying the next step. the calculations are slightly more involved but we get rather easily 
\begin{equation} \varepsilon(t) \sim  - 2 \delta^2 \frac{M^8}{c^5}  e^{-8\delta t} + 124 \delta^4 \frac{M^{14}}{c^9}e^{-14\delta t} ...\end{equation}
 From the algebraic point of view, it is not difficult to see that the process can be continued indefinitely with the appearance of a single new exponential term at each step. So that we can dream of a solution of the form 
 \begin{equation} r(t) \sim \frac{M^2}{c} \sum _{n=0}^{\infty} (-1)^n C_n \delta^ {2n } \left(\frac{M^6}{c^4}\right)^ne^{-(2+6n)\delta t}\end{equation}
 The only difficulty for convergence would be to control the growth of $C_n $.  Actually $C_n $ seems to grow like a factorial, precluding any sort of convergence for any fixed value of $t$. Then in which sense would the above calculations approach a solution? At each step  we have an approximate solution with a very close accuracy for $t$ large. For the time being the moral of the story remains a mystery for the author.  In any case  it is clear that a different method is needed to find the solutions we are looking for.
  
  \section{Concluding remarks} The previous calculations suggest the possible existence of special orbits with an asymptotically (exponentially fast) spiraling convergence to the center. More precisely if we succeed in finding, by whichever method, a solution $r$ of \eqref {r1}with  $$ r(t) \sim Ce^{-2\delta t} $$ for a certain $C= C(M)>0$, from this we can build a solution $u= re^{i\theta}$ with $$ \theta'^2\sim K e^{6\delta t }$$ so that $ |u'|^2 = r'^2 + \theta'^2 r^2 \ge k e^{2\delta t } $ for some positive $k$ and $t$ large. The kinetic energy blows-up exponentially in such a case, which is not absurd since the potential energy also. \begin{rmk} This construction seems interesting, but even in case it works  this is not entirely satisfactory for the following reason: as in the case of \eqref{tired}, the family of spiraling orbits built in this way would depend on only two real parameters (the moment and the initial phase) whereas the phase space has four dimensions. \end{rmk} 
 \begin{rmk} For the moment, despite the formal calculation of the previous section,  we know nothing about the rate of decay of any solution to $0$ . The only certain information that we have on asymptotics is the inequality  $ r (t)  \ge \eta  e^{-2\delta t}$ for some $\eta>0$. This inequality can be refined asymptotically, relying on  the inequality  
  $$ \forall t\in [0, Tmax), \quad \frac{e^{-\delta t}}{ r(t)} \le \frac{c}{M^2} e^{\delta t} + \sqrt{\frac{2}{M^2}F_0 +\frac{ c^2 }{M^4} e^{2\delta t}  }  $$ 
 to yield $$ \liminf_{t\to\infty}  r (t)  e^{2\delta t} \ge \frac{M^2}{2c}.$$ \end{rmk} 
  \begin{rmk} In the case  $M=0$, it should be possible to study if, as in the conservative case, solutions with  initial velocity directed towards the center vanish in finite time, if necessary under some additional conditions. We did not study this question here since it is not our main concern.  \end{rmk} 
  
  \begin{rmk} After the simple model of Rutherford-Bohr explained in \cite{Bohr, Rutherford} and after the introduction of undulatory mechanics by L. De Broglie, the purely probabilistic model of E. Schrodinger (cf. \cite{Schrodinger}) has  been considered the best atomic model since 1926. It does not  seem obvious at all to introduce a damping mechanism in that model, and it might very well happen that in order to do that, an effective coupling between deterministic corpuscular and probabilistic wave conceptions of particles has to be devised, as was always advocated by L. De Broglie and even suggested by  E. Schrodinger himself in \cite{Schrodinger}.  \end{rmk}

\end{document}